\def\Ratl{\mathbb{Q}}
\def\fin{\mathbb{F}}
\def\reR{\mathbb{R}}
\def\Proj{\mathbb{P}}
\def\cxC{\mathbb{C}}
\def\isom{\cong}
\def\intN{\mathbb{N}}
\def\intZ{\mathbb{Z}}
\def\spmode{\scriptstyle}
\def\goto{\rightarrow}
\def\d{{\text{d}}}
\def\and{\text{ and }}
\theoremstyle{plain}
\newtheorem{defi}{Definition}[section]
\newtheorem{lemma}[defi]{Lemma}
\newtheorem{thm}[defi]{Theorem}
\newtheorem{notation}[defi]{Notation}
\theoremstyle{remark}
\newtheorem{rmk}{Remark}
\title{Rational Points on Rational Curves}
\author{Brecken Beers}
\email{brecken.beers@aggiemail.usu.edu}
\author{Yih Sung}
\email{yih.sung@usu.edu}
\begin{document}

\begin{abstract}
For a given elliptic curve, its associated $L$-function evaluated at $1$ is closely related to its real period. In this article, we generalize this principle to a rational curve. We count the rational points over all finite fields and use all the counting information to define two $L$- type series. Then we consider special values of these series at $1$. One of the $L$-type series matches the Dirichlet $L$-series of modulo $4$, so the evaluation at $1$ is $\pi/4$; the special evaluation at $1$ of the other $L$-type series is equal to a real period associated to the rational curve. This identity confirms the general principle that an $L$-type series associated to a variety can reflect its geometry.
\end{abstract}

\subjclass[2010]{14G05, 11M41, 11F67} %Rational Points, Dirichlet Series & Zeta Function, Period

\maketitle

%........................................................................

\section{Introduction}
\subsection{$L$-function of A Smooth Project Curve}
Let $C$ be be a smooth projective curve over $\Ratl$ of genus $g$. Classically, its associated $L$-function is defined by the Euler product $L(C,s) = \prod_p L_p(p^{-s})^{-1}$, where $L_p(T)$ is the local $L$-function. At the good primes, $C$ is defined and nonsingular over $\fin_p$, and $L_p(T)$ is defined by the equation
$$
Z(C/\fin_p;T) = \exp \Big( \sum_{k=1}^\infty |C(\fin_p)| T^k/k \Big) = \frac{L_p(T)}{(1-T)(1-pT)};
$$
at the bad primes, we refer to \cite{ref_bouw&wewers_17,ref_serre70}. In particular, when $C=E$ is an elliptic curve over $\Ratl$ of conduct $N$, its associated $L$-series is defined as follows.
\begin{defi}[$L$-Function of An Elliptic Curve over $\Ratl$, \cite{ref_arithmetic_of_ellip_curve}*{Sec A.16}]
$$
L(E,s) := \prod_{p:\text{prime}} L_p(E,s)^{-1}
$$
where
$$
L_p(E,s) =
\begin{cases}
(1 - a_p p^{-s} + p^{1-2s}) & \text{if $p\not\mid N$} \\
(1 - a_p p^{-s}) & \text{if $p\mid N$ and $p^2\not\mid N$} \\
1 & \text{if $p^2\not\mid N$}
\end{cases}
$$
and $a_p = p+1-|E_p|$ if $p\not\mid N$; $a_p=\pm 1$ if $p\mid N$ and $p^2\not\mid N$.
\end{defi}

Geometrically, one can consider the \emph{N\'eron period}
\begin{equation}
\Omega(E) := \int_{E(\reR)} \omega_E \in\reR,
\end{equation}
where $\omega_E=\d x/2\d y$ is the N\'eron differential and $E(\reR)$ refers to the real points of $E$. Surprisingly, Birch's theorem \cite{ref_birch71} describes a relation between the special value of the $L$-function and N\'eron period:
\begin{equation} \label{eq:ellip_priod_counting}
\frac{L(E,1)}{\Omega(E)} \in \Ratl,
\end{equation}
which suggests one can recover the "length" of the curve by carefully arranging the arithmetic counting. Inspired by \eqref{eq:ellip_priod_counting}, we want to construct an $L$-type series on a rational curve $M$ and relate the series to a right notion of length on $M$. Unlike an elliptic curve, there is no global differential on $\Proj^1$. As we only require an integral over the real points of $M$, we do not need a globally defined differential. Instead, we consider a differential defined over the affine part of $M$. Hence we can make sense of the integral over a real period on the affine part of $M$. On the arithmetic side, the Zeta function of $\Proj^n$ in the classical approach is 
$$
Z(\Proj^n/\fin_q;T) = \frac{1}{(1-T)(1-qT)\cdots(1-q^nT)}
$$ 
\cite{ref_arithmetic_of_ellip_curve}*{Ch 5,~Example 2.1}, which implies the $L$-function of $\Proj^n$ is 
\begin{equation} \label{eq:L_func_of_P^1}
L(\Proj^n,s) = 1,
\end{equation}
which does not reflect any topological information about $M$. In order to extract useful arithmetic information about $M$, we turn to count on $M_{\rm aff}$, the affine part of $M$. In this article, we will present a new way to construct an $L$-type series on $M_{\rm aff}$ to capture the topological features of $M$ such as its real period. In the end, we will link the geometry of $M$ to the arithmetic of $M$ by a twisted $L$-series.

%........................................................................

\subsection{Statement of Results}
Let us state our main results. Let $M$ be defined by
\begin{equation}
X^2+Y^2 = Z^2
\end{equation}
in $\Proj^2$. In particular, the equation reduces to 
$$
x^2+y^2=1
$$
on $M_{\rm aff}=\cxC^2$, the affine part of $M$. Let $p$ be a prime and $M_{\rm{aff},p}$ be the affine variety defined over $\fin_p$. We can then write $|M_{\rm{aff},p}|=p-a_p$, where $a_p$ is called the \emph{error term} in our counting because we expect to have $p$ points on $\Proj^1_{{\rm aff}, p}$. Define $\alpha_p=(-1)^p a_p$ and we obtain two multiplicative characters due to prime factorization. Denote $n=p_1^{r_1}\cdots p_k^{r_k}$ and define
$$
\chi(n) = \prod_{1\le i\le k} a_{p_i^{r_i}} \;\and\; \hat\chi(n) = \prod_{1\le i\le k} \alpha_{p_i^{r_i}}.
$$
Then, we can construct two Dirichlet series associated to $\chi$ and $\hat \chi$:
\begin{gather*}
\zeta(M,s) = \sum_{k=1}^\infty \frac{a_{2k-1}}{(2k-1)^s}
= \sum_{k=1}^\infty \frac{(-1)^{2k-1}}{(2k-1)^s}, \\
\hat\zeta(M,s) = \sum_{k=1}^\infty \frac{\alpha_{2k-1}}{(2k-1)^s}.
\end{gather*}
In fact, there is an equation to link these two $L$-series. We will prove the following:
\begin{thm}[Functional Equation of $L$-series] \label{thm:eq_of_L_series}
$$
\zeta(M,s) \cdot \hat \zeta(M,s) = L_{\chi_2}(X,2s),
$$
where $\chi_2$ is the Dirichlet character of modulo 2. In other words,
$$
\sum_{k=1}^\infty \frac{(-1)^{2k-1}}{(2k-1)^s} \cdot \sum_{k=1}^\infty \frac{\alpha_{2k-1}}{(2k-1)^s} 
= \sum_{k=1}^\infty \frac{1}{(2k-1)^{2s}},
$$
Particularly, when $s=1$,
$$
\sum_{k=1}^\infty \frac{(-1)^{2k-1}}{2k-1} \cdot \sum_{k=1}^\infty \frac{\alpha_{2k-1}}{2k-1} 
= \sum_{k=1}^\infty \frac{1}{(2k-1)^2}.
$$
\end{thm}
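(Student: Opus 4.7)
My approach is Euler-product factorization, reducing the identity locally at each odd prime to the elementary cancellation $(1-T)(1+T)=1-T^{2}$. First I would compute $a_{p^{r}}$ for odd $p$ by factoring $x^{2}+y^{2}=(x+\sqrt{-1}\,y)(x-\sqrt{-1}\,y)$ over $\fin_{p^{r}}[\sqrt{-1}]$: the count $|M_{\mathrm{aff},p^{r}}|$ is $p^{r}-1$ when $-1$ is a square in $\fin_{p^{r}}$ and $p^{r}+1$ otherwise, so
$$
a_{p^{r}}=\chi_{-4}(p)^{r},
$$
where $\chi_{-4}$ is the nontrivial Dirichlet character modulo $4$. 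Consequently $\chi$ coincides with $\chi_{-4}$ on odd integers and is completely multiplicative, and
$$
\zeta(M,s)=\prod_{p\text{ odd}}\bigl(1-\chi_{-4}(p)\,p^{-s}\bigr)^{-1}=L(\chi_{-4},s).
$$

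Since $\hat\zeta(M,s)$ only samples odd indices, every prime appearing is odd, so $\alpha_{p}=(-1)^{p}a_{p}=-a_{p}$. Extending by $\alpha_{p^{r}}=\alpha_{p}^{r}$ makes $\hat\chi$ completely multiplicative with $\hat\chi(p)=-\chi_{-4}(p)$, giving
$$
\hat\zeta(M,s)=\prod_{p\text{ odd}}\bigl(1+\chi_{-4}(p)\,p^{-s}\bigr)^{-1}.
$$
Multiplying the two products and using $\chi_{-4}(p)^{2}=1$ for every odd $p$, each local factor collapses to $(1-p^{-2s})^{-1}$, so
$$
\zeta(M,s)\,\hat\zeta(M,s)=\prod_{p\text{ odd}}\bigl(1-p^{-2s}\bigr)^{-1}=\sum_{n\text{ odd}}\frac{1}{n^{2s}}=L_{\chi_2}(X,2s),
$$
and specializing $s=1$ gives the stated numerical identity. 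As a cross-check, direct Dirichlet convolution yields $(\chi\ast\hat\chi)(p^{r})=\chi_{-4}(p)^{r}\sum_{j=0}^{r}(-1)^{r-j}$, which equals $1$ for even $r$ and $0$ for odd $r$; extended multiplicatively, $\chi\ast\hat\chi$ is the indicator of odd perfect squares, reproducing the right-hand side term by term.

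The algebraic heart of the proof is essentially trivial; the care goes into the setup. The main obstacles are establishing the uniform count $|M_{\mathrm{aff},p^{r}}|=p^{r}\mp 1$ for all $r\ge 1$ (not merely $r=1$, which is the case actually discussed preceding the theorem) and fixing the convention $\alpha_{p^{r}}=\alpha_{p}^{r}$ so that $\hat\chi$ remains completely multiplicative and the twist carrying $\chi$ to $\hat\chi$ is by $(-1)^{\Omega(n)}$; an extension by $(-1)^{\omega(n)}$ would destroy the cancellation $(1-T)(1+T)=1-T^{2}$ at higher prime powers. Convergence is routine for $\operatorname{Re} s>1$; the extension to $s=1$ is handled by the standard conditional convergence of $L(\chi_{-4},s)$ together with absolute convergence of the right-hand side.
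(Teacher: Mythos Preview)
Your proof is correct and takes essentially the same route as the paper: both arguments multiply the two Euler products prime by prime and collapse each local factor via $(1-\chi_{-4}(p)p^{-s})(1+\chi_{-4}(p)p^{-s})=1-p^{-2s}$, yielding $\prod_{p\text{ odd}}(1-p^{-2s})^{-1}=\sum_{n\text{ odd}}n^{-2s}$. Your additional Dirichlet-convolution cross-check and the remarks on complete multiplicativity, the $(-1)^{\Omega(n)}$ twist, and convergence at $s=1$ go beyond what the paper spells out, but the core mechanism is identical.
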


On the geometric side, over $M_{\rm aff}$, there is a well defined holomorphic differential form
$$
\frac{\d x}{y} = -\frac{\d y}{x}.
$$
Naturally, the integral of $\d x/y$ over the real cycle, namely, the real solutions of the equation $x^2+y^2=1$, represents the circumference of the circle. By taking the special value of the $L$-series $\hat\zeta$ at $1$, we actually recover the quarter circumference.
\begin{thm}[Birch Type Theorem of A Rational Curve] \label{thm:main_result}
\begin{equation*}
\zeta(M,1) = \frac{\pi}{4} \;\and\;
\hat\zeta(M,1) = \frac{1}{2} \int_{-1}^1 \frac{1}{\sqrt{1-x^2}} \, \d x
= \int_{0}^1 \frac{1}{\sqrt{1-x^2}} \, \d x = \frac{\pi}{2}.
\end{equation*}
\end{thm}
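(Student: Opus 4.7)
The plan is to identify $\zeta(M,s)$ with the classical Dirichlet $L$-function $L(\chi_4,s)$ of the nontrivial character modulo $4$, evaluate it at $s=1$ by the standard $\arctan$ argument, and then bootstrap $\hat\zeta(M,1)$ from the functional equation already stated in Theorem \ref{thm:eq_of_L_series}. The geometric reinterpretation on the right-hand side of the second identity is then an immediate antiderivative computation.

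Concretely, I would first count $|M_{{\rm aff},p}|$ for odd primes $p$ by using the fact that the smooth projective conic $X^2+Y^2=Z^2$ has exactly $p+1$ points over $\fin_p$, and then subtracting the points at infinity $Z=0$, namely the solutions of $(Y/X)^2=-1$: there are two such points when $p\equiv 1\pmod 4$ and none when $p\equiv 3\pmod 4$. This gives $a_p=p-|M_{{\rm aff},p}|=\chi_4(p)$, and the same computation over $\fin_{p^r}$ produces $a_{p^r}=\chi_4(p^r)$, so $\chi$ coincides with $\chi_4$ on every odd integer and $\zeta(M,s)=L(\chi_4,s)$. Integrating the geometric series $1/(1+x^2)=\sum_{k\ge 0}(-1)^k x^{2k}$ term by term on $[0,1]$, with endpoint convergence justified by Abel's theorem, yields
\[
L(\chi_4,1)=\int_0^1 \frac{\d x}{1+x^2}=\arctan(1)=\frac{\pi}{4},
\]
which settles the first assertion. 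For $\hat\zeta(M,1)$, I would specialize Theorem \ref{thm:eq_of_L_series} at $s=1$; the right-hand side evaluates to $\sum_{k\ge 1}(2k-1)^{-2}=(1-\tfrac14)\zeta(2)=\pi^2/8$, so dividing by $\zeta(M,1)=\pi/4$ forces $\hat\zeta(M,1)=\pi/2$. The remaining integral identity is then the elementary antiderivative $\int_0^1 \d x/\sqrt{1-x^2}=\arcsin(1)=\pi/2$, which by symmetry equals $\tfrac12\int_{-1}^1 \d x/\sqrt{1-x^2}$ and represents the quarter circumference of the unit circle.

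The main obstacle I anticipate is the arithmetic input: one must carry out the point count not only for primes but also for prime powers, and confirm that the naturally defined $a_{p^r}$ really matches $\chi_4(p^r)$, so that $\chi$ is genuinely the Dirichlet character $\chi_4$ rather than some slightly different multiplicative function. Once this identification is secure, the remainder of the argument is entirely classical analytic number theory, and the functional equation of Theorem \ref{thm:eq_of_L_series} does all the heavy lifting to transfer the computation from $\zeta(M,\cdot)$ to $\hat\zeta(M,\cdot)$.
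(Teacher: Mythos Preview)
Your proposal is correct and complete as a proof of the \emph{numerical} statement, but it does not follow the route the paper takes in Section~\ref{sec:main_result}. You evaluate the two sides of $\hat\zeta(M,1)=\int_0^1 \d x/\sqrt{1-x^2}$ separately: the integral via $\arcsin$, and the series by plugging $\zeta(M,1)=\pi/4$ and $\sum_{k\ge 0}(2k+1)^{-2}=\pi^2/8$ into the functional equation of Theorem~\ref{thm:eq_of_L_series}. The paper actually records exactly this computation in its preliminary lemmas (the Dirichlet series lemma, Lemma~\ref{lem:euler_pi_formula}, and Lemma~\ref{lem:integ_cir_of_circle}), and the authors acknowledge in the introduction that the equality is known ``numerically.'' Their main proof is designed to do something more: it starts from the integral, expands binomially to $\sum_k \frac{(2k-1)!!}{(2k)!!}\frac{1}{2k+1}$, replaces the double-factorial ratio by $\frac{1}{A}\int_0^{\pi/2}\cos^{2k}x\,\d x$ via Theorem~\ref{cor:integ_formula_of_!!}, sums the resulting series in closed form, substitutes $u=\tan(x/2)$, and integrates by parts to reach $\frac{2}{A}\sum_k (2k+1)^{-2}$, at which point Theorem~\ref{cor:convolution_formula} identifies this with $\hat\zeta(M,1)$. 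Your argument is shorter and entirely adequate for the stated theorem; the paper's argument is what they advertise as the ``combinatorial transformation'' from the period integral directly to the twisted $L$-series, avoiding the separate evaluation of each side to $\pi/2$.
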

In summary, we discover a new type of $L$-series on $M_{\rm aff}$, and we can recover the quarter length of the real circle by evaluating it at special values. Furthermore, we not only know $\hat\zeta(M,1)=\int_0^1 \frac{1}{\sqrt{1-x^2}}\,\d x=\pi/2$ numerically, we also find a combinatorial argument to transfer the integral $\int_{0}^1 \frac{1}{\sqrt{1-x^2}} \, \d x$ into the twisted $L$-series $\hat\zeta(M,1)$.

%........................................................................

\subsection{Contents}
This article is structured as follows: in Section \ref{sec:count_ratl_pt_on_ratl_curve}, we calculate the number of rational points of $M$ over the finite field $\fin_q$, where $q=p^n$. Through detailed calculation, we elaborate why the classical $L$-function of a rational curve is trivial. In Section \ref{sec:new_type_of_L_series}, we construct a classical Dirichlet $L$-series and a twisted $L$-series by the error terms in rational point counting developed in Section \ref{sec:count_ratl_pt_on_ratl_curve}. Then we consider special values of the $L$-series and prove Theorem \ref{thm:eq_of_L_series}, which is a functional equation to connect these $L$-series. In section \ref{sec:main_result}, we provide a combinatorial argument to prove Theorem \ref{thm:main_result}.

%%%%%%%%%%%%%%%%%%%%%%%%%%%%%%%%%%%%%%%%%%%%%%%%%%%%%%%%

\section{Rational Point Counting on Rational Curves} \label{sec:count_ratl_pt_on_ratl_curve}
\subsection{Point Counting}
In this section we are going to investigate the rational points on a rational curve. Let $M\isom\Proj^1$ defined as $x^2+y^2=z^2$ in $\Proj_\cxC^2$ and its affine part $M_{\rm aff}$ is defined by $x^2+y^2=1$ in $\cxC^2$. 
\begin{notation}
Let $p$ be a prime and $q=p^n$ for some integer $n\in\intN$. We denote $M_q$ the variety defined over $\fin_q$ and $|M_q|$ the number of rational points over $\fin_q$.
\end{notation}
Let us count rational points of the affine part of $M$ over the finite field $\fin_{2^n}$.
\begin{lemma}\label{lem:count_over_2^n}
$$
|M_{\rm{aff},2^n}| = 2^n.
$$
\end{lemma}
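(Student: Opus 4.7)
The plan is to exploit the Frobenius endomorphism in characteristic $2$. Over a field of characteristic $2$, the identity $x^2+y^2 = (x+y)^2$ holds, so the defining equation of $M_{\rm aff, 2^n}$, namely $x^2+y^2 = 1$, is equivalent to $(x+y)^2 = 1$.

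Next I would invoke the fact that the Frobenius map $t \mapsto t^2$ is a bijection on $\fin_{2^n}$ (indeed, on any perfect field of characteristic $2$). Applied to our situation, $(x+y)^2 = 1 = 1^2$ forces $x+y = 1$. Conversely, every pair with $x+y=1$ satisfies $(x+y)^2 = 1$, hence $x^2+y^2 = 1$. Thus the solution set of $x^2+y^2=1$ over $\fin_{2^n}$ coincides with the solution set of the linear equation $x+y = 1$.

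Finally I would count: the linear equation $x+y=1$ in $\fin_{2^n}^2$ has exactly $|\fin_{2^n}| = 2^n$ solutions (for each choice of $x\in\fin_{2^n}$, the value $y = 1 - x = 1+x$ is uniquely determined, since $-1 = 1$ in characteristic $2$). This yields $|M_{\rm aff, 2^n}| = 2^n$, as claimed.

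The proof contains no real obstacle; the only subtlety worth flagging is the use of the perfectness of $\fin_{2^n}$ to pass from $(x+y)^2=1$ back to $x+y=1$ without introducing extraneous solutions, which is precisely what makes characteristic $2$ collapse the conic $x^2+y^2=1$ to an affine line rather than leaving it as a genuine quadric.
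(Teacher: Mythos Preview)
Your proof is correct and is essentially the same argument as the paper's: both exploit the Freshman's Dream in characteristic $2$ to rewrite the defining equation as a perfect square, then use the injectivity of squaring on $\fin_{2^n}$ (equivalently, $a=-a$) to conclude that for each $x$ there is a unique $y=1+x$. The only cosmetic difference is that the paper factors $1-x^2=(1+x)^2$ and solves for $y$, whereas you factor $x^2+y^2=(x+y)^2$ and solve the resulting linear equation.
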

\begin{proof}
As the characteristic of $\fin_{2^n}$ is $2$, 
$$
y^2 = 1 - x^2 = 1+x^2 = (1+x)^2
$$
in $\fin_{2^n}$. Hence, $y=\pm (1+x)$. However, in $\fin_{2^n}$, $(1+x)=-(1+x)$. In other words, for every $x$, there exists a unique solution to the equation $y=1+x$. Therefore, $|M_{\rm{aff},2^n}| = |\fin_{2^n}| = 2^n$. In summary, there is no error term in counting over $\fin_{2^n}$.
\end{proof}

Let us count rational points of $M_q$, where $q=p^n$ and $p$ is an odd prime.
\begin{lemma}\label{lem:count_over_p^n}
Let $p$ be an odd prime and $q=p^n$ for some $n$. Then
\begin{equation} \label{eq:pt_count_on_aff_part}
|M_{\rm{aff},q}| = q - \underbrace{(-1)^{\frac{q+1}{2}}}_{\text{err. term}}. 
\end{equation}
\end{lemma}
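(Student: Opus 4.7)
The plan is to count pairs $(x,y)\in\fin_q\times\fin_q$ with $x^2+y^2=1$ by fixing $x$ and measuring how often $1-x^2$ is a square. Let $\chi_q\colon\fin_q^\ast\to\{\pm 1\}$ denote the quadratic residue character of $\fin_q$, extended by $\chi_q(0)=0$. Since $p$ is odd, for every $a\in\fin_q$ the number of $y\in\fin_q$ with $y^2=a$ is exactly $1+\chi_q(a)$. Summing over $x$ yields
$$
|M_{\rm{aff},q}| = \sum_{x\in\fin_q}\bigl(1+\chi_q(1-x^2)\bigr) = q + S,\qquad S:=\sum_{x\in\fin_q}\chi_q\bigl((1-x)(1+x)\bigr).
$$

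The crux is to show $S=-\chi_q(-1)$. I would carry this out by the rational substitution $t=(1+x)/(1-x)$, a bijection $\fin_q\setminus\{1\}\to\fin_q\setminus\{-1\}$ with inverse $x=(t-1)/(t+1)$, under which $1-x^2=(1-x)^2\,t$. Because $\chi_q$ is trivial on nonzero squares, $\chi_q(1-x^2)=\chi_q(t)$ for $x\ne 1$, while the terms $x=\pm 1$ contribute $0$ since $\chi_q(0)=0$. As $x$ runs over $\fin_q\setminus\{\pm 1\}$ the new parameter $t$ runs over $\fin_q^\ast\setminus\{-1\}$, so
$$
S=\sum_{t\in\fin_q^\ast}\chi_q(t)-\chi_q(-1)=-\chi_q(-1),
$$
using orthogonality of the nontrivial character $\chi_q$ in the last equality.

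The final step is Euler's criterion $\chi_q(-1)=(-1)^{(q-1)/2}$, which identifies the error term in \eqref{eq:pt_count_on_aff_part} as the value $\pm 1$ determined by the congruence class of $q$ modulo $4$ (namely $+1$ when $q\equiv 1\pmod 4$ and $-1$ when $q\equiv 3\pmod 4$). I anticipate the main obstacle to be merely bookkeeping: correctly handling the two degenerate values $x=\pm 1$ and verifying that the M\"obius-like substitution is bijective on the stated domain. Should this become unpleasant, a clean alternative is to invoke the standard character-sum identity $\sum_{x\in\fin_q}\chi_q(ax^2+bx+c)=-\chi_q(a)$, valid whenever $b^2-4ac\ne 0$, applied with $a=-1$, $b=0$, $c=1$; this produces $S=-\chi_q(-1)$ immediately and bypasses the substitution entirely.
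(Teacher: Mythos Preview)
Your argument is correct and complete: the identity $|M_{\rm aff,q}|=q+\sum_{x}\chi_q(1-x^2)$, the M\"obius substitution $t=(1+x)/(1-x)$, and the orthogonality of $\chi_q$ together give $|M_{\rm aff,q}|=q-\chi_q(-1)=q-(-1)^{(q-1)/2}$ uniformly for all prime powers $q=p^n$. The paper, by contrast, takes a quite different route for the base case $q=p$: it first bounds $2\le |M_{\rm aff,p}|\le p+1$ by elementary square-counting, then computes $|M_{\rm aff,p}|\pmod p$ via the standard trick $|M_{\rm aff,p}|\equiv\sum_x(1-x^2)^{(p-1)/2}$, expands binomially, and uses $\sum_{x\in\fin_p}x^k\equiv 0$ unless $(p-1)\mid k$; finally it combines the congruence with the bound to pin down the value, and for $q=p^n$ with $n>1$ it simply cites an external reference. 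Your character-sum approach is both shorter and more general, handling every $q$ at once without the two-step bound-plus-congruence manoeuvre or the appeal to outside sources; the paper's method trades this for avoiding any explicit mention of multiplicative characters. One small remark: both your computation and the paper's own proof yield the exponent $(q-1)/2$, whereas the displayed statement has $(q+1)/2$; this is a typo in the lemma, not a flaw in your argument.
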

\begin{proof}
For the finite field $\fin_p$, the result is equivalent to solving the equation $y^2=1-x^2$. As the set $S=\{ y\in\fin_p^* \mid y=x^2\;\text{for some $x$}\}$ has $\frac{p-1}{2}$ elements, the set $S_1=\{(1-x^2)\neq 0\mid x\in\fin_p\}$ has at most $\frac{p-1}{2}$ elements. Since we can take $\pm y$ for each $x\in S_1$, there are at most $\frac{p-1}{2}\cdot 2$ solutions for $y\neq 0$. If $x=\pm 1$, we have $y=0$. Hence, there are at most $\frac{p-1}{2}\cdot 2 + 2=p+1$ solutions and at least $2$ solutions. 

On the other hand, we can calculate $|M_p|$ modulo $p$ by the standard trick (cf. \cite{ref_yih17}):
\begin{align*}
|M_{\rm{aff},p}| &\equiv \sum_{x\in \fin_p} (1-x^2)^{\frac{p-1}{2}}
= \sum_{x\in \fin_p} \sum_{k=0}^{\frac{p-1}{2}} C^{\frac{p-1}{2}}_k (-1)^k x^{2k} \\
&= \sum_{k=0}^{\frac{p-1}{2}} (-1)^k C^{\frac{p-1}{2}}_k \Big( \sum_{x\in \fin_p} x^{2k}   \Big)
\equiv -(-1)^{\frac{p-1}{2}}
\end{align*}
because
\begin{equation} \label{coeff_of_counting_pts} 
\sum_{x\in\mathbb{F}_p} x^k \equiv
\begin{cases}
-1, & \text{if $(p-1)\mid k$} \\
0, & \text{if $(p-1)\!\not\mid \,k$}
\end{cases}
\mod p.
\end{equation}
As $2\le |M_{\rm{aff},p}| \le p+1$, we know $|M_{\rm{aff},p}|=p-(-1)^{\frac{p-1}{2}}$.

For the general $q=p^n$ case we refer to \cite[Thm 3.1]{ref_aabrandt&hansen_18}.
\end{proof}

We turn to count points at infinity. At infinity, the equation of $M$ locally can be represented by
$$
1+\frac{Y^2}{X^2} = \frac{Z^2}{X^2} \implies 1+y^2=z^2,
$$
so $M$ intersects with the $\Proj^1_\infty$ at the solutions of $1+y^2=0$. Over the finite field $\fin_{2^n}$, the equation is always solvable because
$$
y^2=-1=1 \implies y=\pm 1 = 1.
$$
In other words,
\begin{lemma}
\begin{equation}
|M_{\infty,2^n}| = 1.
\end{equation}
\end{lemma}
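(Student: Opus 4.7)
The plan is to work in the affine chart at infinity that the authors have already set up, and exploit the characteristic-2 Frobenius identity $(a+b)^2 = a^2+b^2$.

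First, I would recall that the points at infinity on $M$ are precisely those with $Z=0$, which in the local chart $X\neq 0$ (coordinates $y=Y/X$, $z=Z/X$) satisfy the equation $1+y^2 = z^2$ together with $z=0$. Hence counting $M_{\infty,2^n}$ amounts to counting solutions $y\in\fin_{2^n}$ of $1+y^2=0$.

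Next, since $\fin_{2^n}$ has characteristic $2$, the Frobenius identity gives $1+y^2 = (1+y)^2$. Thus the equation $1+y^2=0$ becomes $(1+y)^2 = 0$, which over a field forces $1+y=0$, i.e.\ $y=-1=1$ (using $-1=1$ in characteristic $2$). Therefore the equation has the unique solution $y=1$, corresponding to the single projective point $[X:Y:Z]=[1:1:0]$. This gives $|M_{\infty,2^n}|=1$.

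The only mild point to check is that no additional point at infinity arises from the complementary chart $Y\neq 0$; but the defining equation $X^2+Y^2=Z^2$ is symmetric in $X$ and $Y$, so the analysis in the chart $Y\neq 0$ produces exactly the same projective point $[1:1:0]$. I expect no real obstacle here: the argument is essentially the observation that over a characteristic-$2$ field every element is a square and $-1=1$, so the polynomial $1+y^2$ has a unique (double) root.
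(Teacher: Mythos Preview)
Your argument is correct and matches the paper's approach: both reduce to solving $1+y^2=0$ over $\fin_{2^n}$ in the chart $X\neq 0$ and use that in characteristic~$2$ one has $-1=1$ and the square map is injective, yielding the unique solution $y=1$. Your extra check of the complementary chart $Y\neq 0$ is a welcome bit of care that the paper leaves implicit, but otherwise the proofs are the same.
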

With regard to the general odd primes, we have:
\begin{lemma}
Let $p$ be an odd prime and $q=p^n$.
$$
|M_{\infty,p}| = 
\begin{cases}
2 & \text{if $p\equiv 1\mod 4$} \\
0 & \text{if $p\equiv 3\mod 4$}
\end{cases}.
$$
Therefore,
\begin{equation*}
|M_{\infty,p}| = 1+ (-1)^{\frac{p-1}{2}} = 1- \big(\underbrace{-(-1)^{\frac{p-1}{2}}}_{\text{err. term}} \big).
\end{equation*}
In general,
\begin{equation} \label{eq:pt_count_at_infty}
|M_{\infty,q}| = 1+ (-1)^{\frac{q-1}{2}} = 1- \big(\underbrace{-(-1)^{\frac{q-1}{2}}}_{\text{err. term}} \big).
\end{equation}
\end{lemma}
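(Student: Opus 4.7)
The plan is to reduce $|M_{\infty,q}|$ to counting solutions of $y^2 = -1$ in $\fin_q$, exactly as set up in the excerpt, and then invoke Euler's criterion. For the base case $q = p$, since $\fin_p^*$ is cyclic of even order $p-1$, the element $-1$ is a square iff $(-1)^{(p-1)/2} \equiv 1 \pmod p$, i.e., iff $p \equiv 1 \pmod 4$. When $-1 = a^2$, the two roots are $\pm a$; when $-1$ is not a square, there are none. This produces the casewise count $2$ or $0$ directly.

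Next, I would verify the unified closed form $|M_{\infty,p}| = 1 + (-1)^{(p-1)/2}$ by matching both cases: when $p \equiv 1 \pmod 4$, the exponent $(p-1)/2$ is even, giving $1 + 1 = 2$; when $p \equiv 3 \pmod 4$, it is odd, giving $1 - 1 = 0$. Rewriting this as $1 - \bigl(-(-1)^{(p-1)/2}\bigr)$ exposes the error term, where the ``expected'' value $1$ is the unique point at infinity on $\Proj^1$ and $-(-1)^{(p-1)/2}$ is the arithmetic correction detecting whether $-1$ is a quadratic residue modulo $p$.

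For the general $q = p^n$ with $p$ odd, the same Euler-criterion argument applies verbatim in $\fin_q$: the multiplicative group is cyclic of order $q - 1$ (even, since $p$ is odd), so $-1$ is a square iff $(q-1)/2$ is even iff $q \equiv 1 \pmod 4$. In the square case there are again exactly two roots $\pm a$, and otherwise none, yielding the claimed $|M_{\infty,q}| = 1 + (-1)^{(q-1)/2}$.

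I do not foresee any substantive obstacle; the main content is a standard application of Euler's criterion in $\fin_q$. The only hypothesis that must be used with care is that $p$ is odd, which simultaneously ensures that $q - 1$ is even (so $(-1)^{(q-1)/2}$ is well-defined as $\pm 1$) and that $\pm a$ are distinct solutions when $-1$ is a square; both conditions fail in characteristic $2$, which is why that case was treated separately in the preceding lemma.
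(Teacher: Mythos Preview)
Your proposal is correct and follows essentially the same approach as the paper: both reduce to determining whether $-1$ is a square in the field and invoke Euler's criterion. The only difference is that for general $q=p^n$ the paper defers to an external reference, whereas you supply the one-line argument directly via the cyclicity of $\fin_q^*$; your treatment is thus slightly more self-contained but not conceptually different.
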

\begin{proof}
By Fermat's Little theorem, $a^{\frac{p-1}{2}}\equiv 1$ if and only if there exists $x\in\fin_p$ such that $x^2=a$. Therefore, the result follows. Regarding the general finite field $\fin_q$, we refer to \cite{ref_yih17}.
\end{proof}

\begin{rmk} \label{rmk:classical_L_func_of_Proj^1}
Combining \eqref{eq:pt_count_on_aff_part} and \eqref{eq:pt_count_at_infty}, we obtain
$$
|M_q| = |M_{\rm{aff},q}| + |M_{\infty,q}| = q+1,
$$
namely, there is \emph{no} error term in rational point counting on $\Proj^1$. That is why the classical $L$-function of $\Proj^1$ is trivial. It also suggests that we can derive \eqref{eq:pt_count_on_aff_part} by combing \eqref{eq:pt_count_at_infty} and \eqref{eq:L_func_of_P^1}. In this approach, we do not need to go through the technical calculation involving the Jacobi summations.
\end{rmk}

%...................................................................................................................

\section{New Type of $L$-series} \label{sec:new_type_of_L_series}
\subsection{Twisted $L$-series}
As Remark \ref{rmk:classical_L_func_of_Proj^1} points out, from the arithmetic perspective, there is no interesting $L$-function if we consider the entire compact Riemann surface; on the other hand, geometrically, we have to give up the $\infty$ of $\Proj^1$ and consider its affine part, which is the regular complex plane $\cxC$, in order to get a holomorphic differential. From now on, we will focus on the point counting and integrals over the affine part.

Similar to the $L$-function of elliptic curves, we will use the error terms in point counting to construct $L$-function type series. Let $p$ be an \emph{odd} prime. We denote 
$$
a_p := (-1)^{\frac{p-1}{2}},
$$ 
which is the error term in \eqref{eq:pt_count_on_aff_part}. We intend to define an $L$-function type generating function which makes $a_p$ and $a_{p^n}$ compatible. The key is the iteration equation of $a_p$.
\begin{lemma}\label{lem:a_p}
\begin{equation}\label{eq:a_p}
a_{p^k}=a_{p^{k-1}}\cdot a_p.
\end{equation}
Hence, $a_{p^k} = (a_p)^k$.
\end{lemma}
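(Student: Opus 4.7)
The plan is to reduce the identity \eqref{eq:a_p} to a parity computation. Since $a_{p^k}$ is the error term on $M_{\mathrm{aff},p^k}$, applying Lemma~\ref{lem:count_over_p^n} with $q = p^k$ yields the closed form
\[
a_{p^k} = (-1)^{(p^k - 1)/2},
\]
so the lemma is equivalent to the congruence
\[
\frac{p^k - 1}{2} \;\equiv\; \frac{p^{k-1} - 1}{2} + \frac{p - 1}{2} \pmod 2.
\]

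Clearing the $2$'s, this amounts to $(p-1)(p^{k-1} - 1) \equiv 0 \pmod 4$, which I would establish by noting that, since $p$ is odd, $p-1$ is even; and for $k \geq 2$, $p^{k-1}$ is also odd, so $p^{k-1}-1$ is even as well, making the product divisible by $4$. The boundary case $k=1$ is immediate because the second factor is $0$.

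A slightly more conceptual alternative is a case split on $p \bmod 4$. If $p \equiv 1 \pmod 4$ then $p^k \equiv 1 \pmod 4$ for every $k$, so $a_{p^k} = 1$ uniformly and both sides of \eqref{eq:a_p} equal $1$. If $p \equiv 3 \pmod 4$ then $p^k \equiv 3 \pmod 4$ precisely when $k$ is odd, giving $a_{p^k} = (-1)^k$; the right hand side $a_{p^{k-1}} \cdot a_p = (-1)^{k-1} \cdot (-1) = (-1)^k$ matches. The final claim $a_{p^k} = (a_p)^k$ then follows from \eqref{eq:a_p} by a one-line induction on $k$.

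There is no substantive obstacle here—this is an elementary mod-$4$ check. The only care required is bookkeeping: properly instantiating Lemma~\ref{lem:count_over_p^n} at $q = p^k$ rather than at $p$, and handling the base case $k=1$ where the second factor $p^{k-1}-1$ vanishes on the nose.
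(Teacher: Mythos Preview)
Your proposal is correct and follows essentially the same route as the paper: both arguments reduce to an elementary parity check on the exponent $(p^k-1)/2$. The paper writes $\frac{p^k-1}{2} = p\cdot\frac{p^{k-1}-1}{2} + \frac{p-1}{2}$ and uses that $p$ is odd to drop the factor $p$ in the exponent of $-1$, while you subtract and factor the difference as $(p-1)(p^{k-1}-1)\equiv 0\pmod 4$; these are the same computation organized slightly differently.
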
 
\begin{proof}
The result is just a mater of calculation. 
\begin{align*}
a_{p^k}=(-1)^{\frac{p^k-1}{2}} = (-1)^{\frac{p^k-p+p-1}{2}} 
= \underbrace{\Big( (-1)^{\frac{p^{k-1}-1}{2}} \Big)^p}_{=(-1)^{\frac{p^{k-1}-1}{2}} } \cdot a^{\frac{p-1}{2}}.
\end{align*}
Thus, $a_{p^k}=a_{p^{k-1}}\cdot a_p$.
\end{proof}

In particular, consider $k=1$ and we obtain $a_p = a_{p^0} \cdot a_p = a_1 \cdot a_p$, which implies 
\begin{equation}
a_{p^0} = a_1 = 1.
\end{equation}
However, $a_{p^k}=(-1)^{\frac{p^k-1}{2}}$ is not the only solution to the recursive equation \eqref{eq:a_p}:
\begin{equation}
\alpha_{p^k} = (-1)^{k} (-1)^{\frac{p^k-1}{2}} = (-1)^{k} a_{p^k}
\end{equation}
also satisfies equation \eqref{eq:a_p}: 
$$
\alpha_{p^k}=(-1)^{k-1} (-1)^{\frac{p^{k-1}-1}{2}} \cdot (-1)(-1)^{\frac{p-1}{2}}
= \alpha_{p^{k-1}}\cdot \alpha_p.
$$ 
Hence, by Lemma \ref{lem:a_p}, 
\begin{equation}
\alpha_{p^k} = (\alpha_p)^k.
\end{equation}

Now we have two options to construct the desired series. First, we can construct a classical Dirichlet $L$-series by $\{a_n\}$. Let
$$
f_p(s) = \sum_{k=0}^\infty \frac{a_{p^k}}{p^{ks}} 
=  \sum_{k=0}^\infty \frac{(a_p)^k}{(p^s)^k} = \frac{1}{1-\frac{a_p}{p^s}},
$$
which is precisely the Dirichlet $L$-series of modulo $4$:
$$
\zeta(M,s) = \prod_{p} f_p(s) 
 = \prod_{p} \frac{1}{1-\frac{a_p}{p^s}}
 = \sum_{n=1}^\infty \frac{a_n}{n^s} = \sum_{n=0}^{\infty} \frac{(-1)^n}{(2n+1)^s}.
$$

We can twist the construction on the error terms because $\{\alpha_{p^n}\}$ also satisfies the iterative equation \eqref{eq:a_p}. Consider an $L$-type generating function at prime $p$ by the twisted factors:
\begin{align*}
\hat f_p(s) = \sum_{k=0}^\infty \frac{\alpha_{p^k}}{p^{ks}} 
=  \sum_{k=0}^\infty \frac{(-1)^k (a_p)^k}{(p^s)^k} = \frac{1}{1+\frac{a_p}{p^s}},
\end{align*}
which induces a Dirichlet $L$-type function
$$
\hat\zeta(M,s) = \prod_{p} \hat f_p(s) 
 = \prod_{p} \frac{1}{1+\frac{a_p}{p^s}}
 = \prod_{p} \frac{1}{1-\frac{\alpha_p}{p^s}} 
 = \sum_{n=1}^\infty \frac{\alpha_n}{n^s},
$$
where $\alpha_n$ satisfies the equations
\begin{equation}
\begin{cases}
\alpha_{p^k} = (-1)^{k} (-1)^{\frac{p^k-1}{2}} & \text{if $p$ is a prime} \\
\alpha_{rs} = \alpha_r \cdot \alpha_s & \text{if $(r,s)=1$}
\end{cases}.
\end{equation}

These $\{a_n\}$ and $\{\alpha_n\}$ define two Dirichlet characters. Let $\chi(n)=a_n$, which is the Dirichlet character of modular $4$ and $\hat\chi(n)=\alpha_n$ for $n=p_1^{r_1}p_2^{r_2}\cdots p_k^{r_k}$ can be written as
\begin{equation}
\hat\chi(n) = (-1)^{r_1+r_2+\cdots+r_k} \chi(n).
\end{equation}
Obviously, $\chi$ and $\hat\chi$ are multiplicative functions over $\intZ$.

\subsection{Special Values of The Twisted $L$-series}
In this section, we will explore the special values of the $L$-series $\zeta(M,1)$ and $\hat\zeta(M,1)$. As $\zeta(M,1)$ is the classical Dirichlet series, its special value at $1$ is well known:
\begin{lemma}[Dirichlet series \cite{book_fourier_analysis}*{Ch2} ]
Let $p_n$ be the $n$-th prime. Then
$$
\frac{\pi}{4} = \zeta(M,1) 
= \prod_{p\equiv 1\; (4)} \frac{1}{1-\frac{1}{p}} \cdot \prod_{p\equiv 3\; (4)} \frac{1}{1+\frac{1}{p}}
= \prod_{n=2}^\infty \frac{1}{1-\frac{(-1)^{\frac{p_n-1}{2}}}{p_n}}
= \prod_{n=2}^\infty \frac{1}{1-\frac{\sin \frac{\pi p_n}{2}}{p_n}}.
$$
\end{lemma}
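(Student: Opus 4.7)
The plan is to identify $\zeta(M,1)$ with the Leibniz series for $\pi/4$ via the Euler product already established, and then to rewrite the Euler factors using the residue of primes modulo $4$.

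First, I would observe that for the odd prime $p$, the error term $a_p=(-1)^{(p-1)/2}$ equals $+1$ precisely when $p\equiv 1\pmod 4$ and equals $-1$ when $p\equiv 3\pmod 4$; the prime $p=2$ contributes no factor since $a_{2^k}=0$ as established in Lemma \ref{lem:count_over_2^n}. Splitting the Euler product derived earlier in this section,
\[
\zeta(M,s)\;=\;\prod_{p\text{ odd}}\frac{1}{1-\tfrac{a_p}{p^s}}\;=\;\prod_{p\equiv 1\,(4)}\frac{1}{1-\tfrac{1}{p^s}}\cdot\prod_{p\equiv 3\,(4)}\frac{1}{1+\tfrac{1}{p^s}},
\]
which at $s=1$ gives the first two displayed equalities. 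The product indexing $n\ge 2$ is simply a reindexing that skips $p_1=2$, and the replacement of $(-1)^{(p_n-1)/2}$ by $\sin(\pi p_n/2)$ is the elementary trigonometric identity $\sin(\pi p/2)=(-1)^{(p-1)/2}$ for odd $p$, so only the arithmetic identification $\zeta(M,1)=\pi/4$ needs work.

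Next, I would expand the Euler product into the Dirichlet series form already recorded,
\[
\zeta(M,1)=\sum_{n=0}^\infty \frac{(-1)^n}{2n+1},
\]
and invoke the Leibniz formula. The cleanest route is to start from the geometric series $\tfrac{1}{1+t^2}=\sum_{n=0}^\infty (-1)^n t^{2n}$ valid for $|t|<1$, integrate term-by-term on $[0,x]$ with $x<1$ to obtain
\[
\arctan x \;=\; \sum_{n=0}^\infty \frac{(-1)^n x^{2n+1}}{2n+1},
\]
and then pass to the limit $x\to 1^-$. Since $\arctan$ is continuous at $1$ with value $\pi/4$, and the series $\sum (-1)^n/(2n+1)$ converges by Leibniz's alternating test, Abel's theorem on power series identifies the boundary value of the series with $\pi/4$.

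The main obstacle is the passage between the Dirichlet series and the Euler product at $s=1$, since the Euler product is only absolutely convergent for $\Re(s)>1$. To handle this cleanly, I would work with $s>1$ first, where both the series and the Euler product converge absolutely and are equal by the standard Euler factorization of multiplicative Dirichlet series, and then take $s\to 1^+$. On the series side, Abel's theorem again secures the limit as $\sum (-1)^n/(2n+1)=\pi/4$; on the Euler product side, a classical argument (using that $\prod_{p\equiv 3\,(4)}(1+p^{-s})^{-1}$ and $\prod_{p\equiv 1\,(4)}(1-p^{-s})^{-1}$ each extend continuously to $s=1$ when combined, a fact usually phrased as the non-vanishing and continuity of $L(\chi,s)$ at $s=1$ for the non-trivial character modulo $4$) gives equality of the limits. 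This analytic continuation to the edge of the critical strip is the only subtle point; the rest of the statement is formal manipulation once Leibniz's formula is in hand.
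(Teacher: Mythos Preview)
Your argument is correct and complete: you correctly identify $a_p=(-1)^{(p-1)/2}$ with the quadratic character modulo $4$, split the Euler product accordingly, justify the trigonometric rewriting, and then establish $\sum_{n\ge 0}(-1)^n/(2n+1)=\pi/4$ via the arctangent series and Abel's theorem. Your closing discussion of the passage from $s>1$ to $s=1$ in the Euler product is a genuine subtlety that you handle appropriately by invoking continuity of $L(\chi,\cdot)$ at $s=1$.

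The paper, however, does not prove this lemma at all: it simply records it as a classical fact with a reference to a Fourier-analysis text. So there is no proof in the paper to compare against; your write-up supplies the details the paper omits. If anything, you have done more than the paper asks, since the authors treat this as background and reserve their effort for the twisted series $\hat\zeta(M,1)$ in the following lemma.
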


With regard to the special value of $\hat\zeta$, we have:
\begin{lemma}[Euler's product formula of $\pi$]\label{lem:euler_pi_formula}
Let $p_n$ be the $n$-th prime. Then
$$
\frac{\pi}{2} = \hat\zeta(M,1)
= \prod_{p\equiv 1\; (4)} \frac{1}{1+\frac{1}{p}} \cdot \prod_{p\equiv 3\; (4)} \frac{1}{1-\frac{1}{p}}
= \prod_{n=2}^\infty \frac{1}{1+\frac{(-1)^{\frac{p_n-1}{2}}}{p_n}}
= \prod_{n=2}^\infty \frac{1}{1+\frac{\sin \frac{\pi p_n}{2}}{p_n}}.
$$
\end{lemma}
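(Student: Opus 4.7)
The plan is to derive the numerical value $\hat\zeta(M,1)=\pi/2$ by combining Theorem~\ref{thm:eq_of_L_series} with the preceding lemma, and then to read off the three product representations directly from the Euler product for $\hat\zeta(M,s)$ specialized at $s=1$.

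For the value, I would evaluate the functional equation of Theorem~\ref{thm:eq_of_L_series} at $s=1$ to obtain
$$
\zeta(M,1)\cdot \hat\zeta(M,1) \;=\; \sum_{k=1}^\infty \frac{1}{(2k-1)^2}.
$$
The previous lemma supplies $\zeta(M,1)=\pi/4$ (Leibniz's series), while the right-hand side is the classical value $(1-2^{-2})\zeta(2)=\tfrac{3}{4}\cdot \tfrac{\pi^2}{6}=\pi^2/8$. Solving gives $\hat\zeta(M,1)=(\pi^2/8)/(\pi/4)=\pi/2$.

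For the product formulas, I would start from the Euler product
$$
\hat\zeta(M,s) \;=\; \prod_p \frac{1}{1-\alpha_p/p^s}
$$
already established earlier in this section. Because $\alpha_p=-a_p=-(-1)^{(p-1)/2}$ at every odd prime (and the Euler factor at $p=2$ is trivially $1$ since $\chi$ vanishes there), specializing to $s=1$ yields a factor $1/(1+1/p)$ when $p\equiv 1\pmod{4}$ and $1/(1-1/p)$ when $p\equiv 3\pmod{4}$; grouping by residue class mod $4$ produces the first product representation. Reassembling both cases into the single expression $1/(1+(-1)^{(p_n-1)/2}/p_n)$ over the odd primes $p_n$ (so $n\ge 2$) gives the second; and finally, the elementary identity $(-1)^{(p-1)/2}=\sin(\pi p/2)$ for odd $p$, checked case-by-case on $p\bmod 4$, converts it into the third.

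The only genuine obstacle is the appeal to Theorem~\ref{thm:eq_of_L_series}, whose proof lives separately in this section. Once that functional equation is available, the present lemma reduces to the two elementary evaluations above (Leibniz's series and $\sum (2k-1)^{-2}=\pi^2/8$) plus a routine algebraic rearrangement of the Euler product.
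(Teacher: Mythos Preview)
Your proposal is correct and uses essentially the same idea as the paper: multiply $\zeta(M,1)$ and $\hat\zeta(M,1)$ together so that the Euler factors combine to $\prod_{p\text{ odd}}(1-p^{-2})^{-1}=\sum_{n\text{ odd}}n^{-2}=\pi^2/8$, then divide by $\zeta(M,1)=\pi/4$. The only wrinkle is ordering: in the paper this lemma is proved \emph{first}, via the direct $s=1$ computation \eqref{eq:convolution_prod_formula}, and Theorem~\ref{thm:eq_of_L_series} (restated as Theorem~\ref{cor:convolution_formula}) is then obtained by ``raising the power of $1/p$ to $s$'' in that same argument; so invoking the functional equation here reverses the paper's logical flow, though no circularity results since the theorem's proof does not use the lemma's conclusion.
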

\begin{proof}
Consider the product
\begin{equation} \label{eq:convolution_prod_formula}
\begin{split}
&\Bigg(\prod_{p\equiv 1\; (4)} \frac{1}{1+\frac{1}{p}} \cdot \prod_{p\equiv 3\; (4)} \frac{1}{1-\frac{1}{p}} \Bigg)
\cdot \Bigg( \prod_{p\equiv 1\; (4)} \frac{1}{1-\frac{1}{p}} \cdot \prod_{p\equiv 3\; (4)} \frac{1}{1+\frac{1}{p}} \Bigg) \\
&= \prod_{p:\,\text{odd prime}} \frac{1}{1-\frac{1}{p^2}} = \sum_{n:\,\text{odd}} \frac{1}{n^2} = \frac{\pi^2}{8}.
\end{split}
\end{equation}
Since we know $\prod_{p\equiv 1\; (4)} \frac{1}{1-\frac{1}{p}} \cdot \prod_{p\equiv 3\; (4)} \frac{1}{1+\frac{1}{p}} = \sum_{k=1}^\infty {\spmode(-1)^{2k-1}} \frac{1}{2k-1} = \frac{\pi}{4}$, which implies the desired result.
\end{proof}

We can easily extend the argument and result in \eqref{eq:convolution_prod_formula} by raising the power of $1/p$ to $s$ as follows.
\begin{thm} [Functional Equation of $L$-series]  \label{cor:convolution_formula}
$$
\sum_{k=1}^\infty \frac{(-1)^{2k-1}}{(2k-1)^s} \cdot \sum_{k=1}^\infty \frac{\alpha_{2k-1}}{(2k-1)^s} 
= \sum_{k=1}^\infty \frac{1}{(2k-1)^{2s}}.
$$
Particularly, when $s=1$,
$$
\sum_{k=1}^\infty \frac{(-1)^{2k-1}}{2k-1} \cdot \sum_{k=1}^\infty \frac{\alpha_{2k-1}}{2k-1} 
= \sum_{k=1}^\infty \frac{1}{(2k-1)^2}.
$$
\end{thm}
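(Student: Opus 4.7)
The plan is to lift the proof of Lemma \ref{lem:euler_pi_formula} to the statement for a general variable $s$, by carrying a factor of $1/p^s$ through the argument in place of $1/p$. The starting point is the two Euler product representations derived earlier in the section,
$$
\zeta(M,s) \;=\; \prod_{p\text{ odd}} \frac{1}{1 - a_p/p^s},
\qquad
\hat\zeta(M,s) \;=\; \prod_{p\text{ odd}} \frac{1}{1 + a_p/p^s},
$$
where $a_p = (-1)^{(p-1)/2}$ and hence $\alpha_p = -a_p$.

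The key local computation is the pairing of Euler factors at each odd prime. Since $a_p = \pm 1$, so $a_p^2 = 1$, each odd prime contributes
$$
\frac{1}{(1 - a_p/p^s)(1 + a_p/p^s)} \;=\; \frac{1}{1 - a_p^2/p^{2s}} \;=\; \frac{1}{1 - 1/p^{2s}}.
$$
Taking the product over all odd primes and identifying the result with a Dirichlet series gives
$$
\zeta(M,s)\cdot\hat\zeta(M,s) \;=\; \prod_{p\text{ odd}} \frac{1}{1 - 1/p^{2s}}
\;=\; \sum_{\substack{n\ge 1 \\ n\text{ odd}}} \frac{1}{n^{2s}}
\;=\; \sum_{k=1}^\infty \frac{1}{(2k-1)^{2s}},
$$
which is precisely $L_{\chi_2}(X,2s)$. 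The special case $s = 1$ is then obtained by substitution, and reproduces the identity \eqref{eq:convolution_prod_formula} already used in the proof of Lemma \ref{lem:euler_pi_formula}.

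The main obstacle is simply the bookkeeping of convergence. For $\operatorname{Re}(s) > 1$ all three Euler products converge absolutely, so the termwise multiplication, the cancellation $a_p^2 = 1$, and the Dirichlet-series expansion of $\prod_{p\text{ odd}}(1-1/p^{2s})^{-1}$ are justified in the standard way. The identity of Dirichlet series on the half-plane $\operatorname{Re}(s) > 1$ then extends by analytic continuation to the region where all three series are known to converge, which in particular includes $s = 1$; alternatively, the argument of Lemma \ref{lem:euler_pi_formula} transposes verbatim to give the $s=1$ case directly, using the conditional convergence of $\zeta(M,1)$ together with the absolute convergence of $\prod_{p\text{ odd}}(1-1/p^2)^{-1}$. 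No new combinatorial input is required beyond what is already encoded in the Euler product of $\hat\zeta(M,s)$.
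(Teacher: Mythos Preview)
Your argument is correct and is exactly the one the paper has in mind: the text just before the theorem says ``We can easily extend the argument and result in \eqref{eq:convolution_prod_formula} by raising the power of $1/p$ to $s$,'' and your computation $\bigl(1-a_p/p^s\bigr)^{-1}\bigl(1+a_p/p^s\bigr)^{-1}=\bigl(1-1/p^{2s}\bigr)^{-1}$ at each odd prime is precisely that extension. Your added remarks on the region of absolute convergence and the passage to $s=1$ are a welcome refinement that the paper leaves implicit.
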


%%%%%%%%%%%%%%%%%%%%%%%%%%%%%%%%%%%%%%%%%%%%%%%%%%%%%%%%%

\section{Main Results} \label{sec:main_result}
Now we turn to examine the geometric side of $M$. We will present our main identity in relating the twisted $L$-series $\hat\zeta(M,1)$ with the quarter circumference integral $\int_0^1 \frac{1}{\sqrt{1-x^2}} \,\d x$.
\subsection{Elementary Identities}
First, recall some elementary integrals in calculus:
\begin{lemma} \label{lem:int_sin^n&int_cos^n}
\begin{equation}
\begin{split}
\int_0^{\pi/2} \sin^nx \,\d x &= \frac{n-1}{n} \int_0^{\pi/2} \sin^{n-2}x \,\d x, \\
\int_0^{\pi/2} \cos^nx \,\d x &= \frac{n-1}{n} \int_0^{\pi/2} \cos^{n-2}x \,\d x.
\end{split}
\end{equation}
\end{lemma}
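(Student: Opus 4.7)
The plan is to prove both reduction formulas by a single integration by parts, with the second one following immediately from the first via the substitution $x \mapsto \pi/2 - x$.

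For the sine case, I would start from $\sin^n x = \sin^{n-1} x \cdot \sin x$ and integrate by parts with $u = \sin^{n-1} x$ and $\d v = \sin x\,\d x$, so that $\d u = (n-1)\sin^{n-2} x\cos x\,\d x$ and $v = -\cos x$. The boundary term $\bigl[-\sin^{n-1} x \cos x\bigr]_0^{\pi/2}$ vanishes at both endpoints for $n\geq 2$ (cosine kills the upper endpoint, sine the lower). This leaves $(n-1)\int_0^{\pi/2}\sin^{n-2} x \cos^2 x\,\d x$, into which I substitute $\cos^2 x = 1-\sin^2 x$ to obtain
\begin{equation*}
\int_0^{\pi/2}\sin^n x\,\d x = (n-1)\int_0^{\pi/2}\sin^{n-2} x\,\d x - (n-1)\int_0^{\pi/2}\sin^n x\,\d x.
\end{equation*}
Collecting the $\sin^n$ terms on the left yields $n\int_0^{\pi/2}\sin^n x\,\d x = (n-1)\int_0^{\pi/2}\sin^{n-2} x\,\d x$, which is the claimed identity.

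For the cosine case, I would avoid a duplicated integration by parts and simply apply the change of variables $x \mapsto \pi/2 - x$ to both sides of the sine identity, using $\sin(\pi/2 - x) = \cos x$ and the fact that the substitution preserves the interval $[0,\pi/2]$ (up to orientation, which is absorbed by the Jacobian $-1$ and the flip of limits). The two identities are then formally equivalent.

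There is no real obstacle here: the entire content is a standard integration by parts together with the Pythagorean identity. The only minor point to be careful about is verifying that the boundary term truly vanishes, which requires $n\geq 2$ so that $\sin^{n-1} 0 = 0$; the formula is trivial or vacuous for $n=0,1$ and I would simply note this.
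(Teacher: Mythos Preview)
Your argument is correct and matches the paper's approach: the paper simply states that the proof only involves integration by parts and leaves the details to the reader, which is exactly what you have supplied. The additional observation that the cosine identity follows from the sine identity via $x\mapsto \pi/2 - x$ is a nice touch, though a direct integration by parts for cosine works equally well.
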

\begin{proof}
The proof only involves integration by parts, so we leave the details to the readers.
\end{proof}

Notice that $\int_0^{\pi/2} \cos^2 x \,\d x = \frac{\pi}{4}$ and $\sum_{k=0}^\infty (-1)^k \frac{1}{2k+1}=\frac{\pi}{4}$ numerically. In the following calculation, we illustrate a combinatorial transformation between the integral and the series.
\begin{lemma} \label{lem:int_cos^2}
$$
\int_0^{\pi/2} \cos^2 x \,\d x = \frac{\pi}{4} = \sum_{k=0}^\infty (-1)^k \frac{1}{2k+1}.
$$
\end{lemma}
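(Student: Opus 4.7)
The statement has two claims: the integral equals $\pi/4$, and the Leibniz-type series equals $\pi/4$. Since the author advertises a \emph{combinatorial transformation} bridging the two, the plan is not merely to evaluate each side in isolation but to actually move from the trigonometric integral to the alternating series via a change of variable and a termwise expansion.

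First, I would compute the integral directly. Two routes are available: (i) apply Lemma \ref{lem:int_sin^n&int_cos^n} with $n=2$ to obtain $\int_0^{\pi/2}\cos^2 x\,\d x = \tfrac{1}{2}\int_0^{\pi/2}\d x = \tfrac{\pi}{4}$; or (ii) use the double angle identity $\cos^2 x = \tfrac{1}{2}(1+\cos 2x)$ and integrate. Either way, the value $\pi/4$ follows in one line, so this is not where the substantive work lies.

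The real content is the bridge to the series. I would substitute $u = \tan x$, so that $\cos^2 x = (1+u^2)^{-1}$ and $\d x = (1+u^2)^{-1}\d u$, which converts the integral into
\begin{equation*}
\int_0^{\pi/2} \cos^2 x\,\d x = \int_0^{\infty}\frac{\d u}{(1+u^2)^2}.
\end{equation*}
Splitting this at $u=1$ and then performing $v=1/u$ on the tail $[1,\infty)$ turns the tail into $\int_0^1 \frac{v^2\,\d v}{(1+v^2)^2}$; adding back the head $\int_0^1 \frac{\d u}{(1+u^2)^2}$ gives the combined integrand $\frac{1+u^2}{(1+u^2)^2} = \frac{1}{1+u^2}$. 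Expanding this as the geometric series $\sum_{k=0}^\infty (-1)^k u^{2k}$, which converges uniformly on any $[0,r]$ with $r<1$, and integrating termwise, yields $\sum_{k=0}^\infty (-1)^k/(2k+1)$. Justifying interchange at the endpoint $u=1$ uses Abel's theorem on the resulting alternating series, since the series is conditionally convergent.

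The main obstacle I expect is the termwise interchange of sum and integral at $u=1$: the geometric expansion diverges at the boundary, so one cannot naïvely apply a dominated-convergence argument. The cleanest remedy is to integrate up to $1-\epsilon$, interchange summation and integration there, and then pass to $\epsilon \to 0^+$ using Abel's theorem together with the alternating-sign structure of $\sum (-1)^k/(2k+1)$. Everything else is routine: the $u=\tan x$ substitution, the folding symmetry $v=1/u$, and the geometric expansion are all elementary calculus steps, and no deeper arithmetic input is needed at this stage.
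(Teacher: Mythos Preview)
Your proposal is correct and follows essentially the same route as the paper: the substitution $u=\tan x$ to reach $\int_0^\infty (1+u^2)^{-2}\,\d u$, the split at $u=1$ with the inversion $v=1/u$ on the tail, the recombination to $\int_0^1 (1+u^2)^{-1}\,\d u$, and the termwise geometric expansion are exactly the steps the paper takes. Your added remarks on Abel's theorem at the endpoint supply a justification the paper leaves implicit, and your preliminary direct evaluation of the integral is an extra sanity check not present in the paper, but neither changes the structure of the argument.
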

\begin{proof}
By trigonometrical substitution:
$$
\int_0^{\pi/2} \cos^2 x \,\d x = \int_0^{\pi/2} \frac{1}{1+\tan^2 x} \,\d x
=\int_0^\infty \frac{1}{(1+u^2)^2} \,\d u,
$$
where $u=\tan x$. Compute
\begin{align*}
&\int_0^\infty \frac{1}{(1+u^2)^2} \,\d u = \int_0^1 \frac{1}{(1+u^2)^2} \,\d u 
+ \int_1^\infty \frac{1}{(1+u^2)^2} \,\d u \\
&= \int_0^1 \frac{1}{(1+u^2)^2} \,\d u + \int_0^1 \frac{v^2}{(v^2+1)^2} \,\d v
= \int_0^1 \frac{1}{(1+u^2)^2} \,\d u + \int_0^1 \frac{u^2}{(1+u^2)^2} \,\d v \\
&= \int_0^1 \frac{1}{1+u^2} \,\d u = \sum_{k=0}^\infty (-1)^k \frac{1}{2k+1}.
\end{align*}
by letting $v=1/u$. Hence, we are done.
\end{proof}

\begin{notation}[Double factorial]
Let $n\in\intN$ be an integer. If $n=2k$ is even, then
$$
n!! = \prod_{1\le i \le k} (2i) = n(n-2)\cdots 4\cdot 2;
$$
if $n=2k+1$ is odd, then
$$
n!! = \prod_{1\le i \le k+1} (2i-1) = n(n-2)\cdots 3\cdot 1.
$$
\end{notation}

By iterating Lemma \ref{lem:int_sin^n&int_cos^n}, we have the following identities:
\begin{thm} \label{cor:integ_formula_of_!!}
$$
\int_0^{\pi/2} \sin^{2k}x \,\d x  = \int_0^{\pi/2} \cos^{2k}x \,\d x
= \frac{1\cdot 3\cdots (2k-1)}{2\cdot 4\cdots (2k)} 
\cdot 2 \sum_{k=0}^\infty (-1)^k \frac{1}{2k+1}.
$$
In other words,
\begin{equation}
\frac{(2k-1)!!}{(2k)!!} = \frac{1\cdot 3\cdots (2k-1)}{2\cdot 4\cdots (2k)} 
= \frac{\int_0^{\pi/2} \sin^{2k}x \,\d x}{2 \sum_{k=0}^\infty {\scriptstyle(-1)^k} \frac{1}{2k+1}}
= \frac{\int_0^{\pi/2} \cos^{2k}x \,\d x}{2 \sum_{k=0}^\infty {\spmode(-1)^k} \frac{1}{2k+1}}.
\end{equation}
\end{thm}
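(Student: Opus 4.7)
The plan is to iterate the reduction formula from Lemma \ref{lem:int_sin^n&int_cos^n} down to a base case, and then invoke Lemma \ref{lem:int_cos^2} to rewrite the base in the desired combinatorial form. Concretely, applying the cosine half of Lemma \ref{lem:int_sin^n&int_cos^n} once to $\int_0^{\pi/2}\cos^{2k}x\,\d x$ strips two powers of cosine and multiplies by $(2k-1)/(2k)$; iterating $k$ times (by induction on $k$) reduces the exponent to zero, leaving the accumulated factor
$$
\frac{2k-1}{2k}\cdot\frac{2k-3}{2k-2}\cdots\frac{1}{2}
=\frac{(2k-1)!!}{(2k)!!}
=\frac{1\cdot 3\cdots (2k-1)}{2\cdot 4\cdots (2k)}
$$
multiplied by the base integral $\int_0^{\pi/2}\d x=\pi/2$.

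Next, I would rewrite the base value as $\pi/2 = 2\cdot(\pi/4)$ and substitute the Leibniz series expansion $\pi/4=\sum_{k=0}^\infty(-1)^k/(2k+1)$ furnished by Lemma \ref{lem:int_cos^2}. This produces the right-hand side of the statement. Finally, the equality of the sine and cosine integrals is immediate from the substitution $x\mapsto \pi/2 - x$, which preserves the interval $[0,\pi/2]$ and exchanges $\sin$ with $\cos$; applied to $\int_0^{\pi/2}\sin^{2k}x\,\d x$ it yields $\int_0^{\pi/2}\cos^{2k}x\,\d x$.

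There is no real obstacle here: the reduction formula is a routine induction, and Lemma \ref{lem:int_cos^2} has already packaged the nontrivial combinatorial-to-analytic bridge that converts $\pi/4$ into the alternating series. The only bookkeeping to watch is lining up the indices so that exactly $k$ applications of the reduction formula produce the double-factorial ratio $(2k-1)!!/(2k)!!$ and terminate at the constant integrand $1$ rather than $\cos^2 x$ or $\cos x$; an induction on $k$ with base case $k=0$ (where the formula reads $1\cdot(\pi/2)=\pi/2$ trivially) makes this transparent.
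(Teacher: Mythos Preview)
Your proposal is correct and follows essentially the same route as the paper: iterate Lemma \ref{lem:int_sin^n&int_cos^n} and then invoke Lemma \ref{lem:int_cos^2} to convert the base value into the Leibniz series. The only cosmetic difference is that the paper stops the iteration at $\int_0^{\pi/2}\cos^2 x\,\d x$ and applies Lemma \ref{lem:int_cos^2} directly (then multiplies and divides by $2$ to get the clean double-factorial ratio), whereas you iterate one step further to $\int_0^{\pi/2}1\,\d x=\pi/2$ and rewrite $\pi/2=2\cdot(\pi/4)$; both are equivalent and your explicit mention of the $x\mapsto\pi/2-x$ substitution for the $\sin$/$\cos$ equality is a small clarification the paper leaves implicit.
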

\begin{proof}
This is a direct consequence of Lemma \ref{lem:int_sin^n&int_cos^n} and Lemma \ref{lem:int_cos^2}:
$$
\int_0^{\pi/2} \cos^{2k}x \,\d x = \frac{1\cdot 3\cdots (2k-1)}{4\cdot 6 \cdots (2k)} 
\cdot \int_0^{\pi/2} \cos^{2}x \,\d x = \frac{1\cdot 3\cdots (2k-1)}{2\cdot 4 \cdots (2k)} 
\cdot 2 \sum_{k=0}^\infty (-1)^k \frac{1}{2k+1}.
$$
\end{proof}

%...................................................................................................................

\subsection{Twisted $L$-series And The Real Period}
We are ready to present the proof of Theorem \ref{thm:main_result}. Recall a simple fact about the circumference integral of a circle:
\begin{lemma}\label{lem:integ_cir_of_circle}
$$
\int_{-1}^1 \frac{1}{\sqrt{1-x^2}} \, \d x = \pi.
$$
\end{lemma}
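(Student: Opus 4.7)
The plan is to evaluate the integral by a standard trigonometric substitution, the only subtlety being to verify that the improper integral at the two endpoints $x=\pm 1$ actually converges. First I would note that the integrand $1/\sqrt{1-x^2}$ is continuous on the open interval $(-1,1)$ but has vertical asymptotes at $x=\pm 1$, so the integral should be interpreted as the improper integral $\lim_{a\to -1^+,\,b\to 1^-}\int_a^b (1-x^2)^{-1/2}\,\d x$.

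Next I would apply the substitution $x=\sin\theta$ with $\theta\in[-\pi/2,\pi/2]$. Under this change of variables one has $\d x=\cos\theta\,\d\theta$ and $\sqrt{1-x^2}=\sqrt{\cos^2\theta}=\cos\theta$, where the positive square root is justified because $\cos\theta\ge 0$ on the chosen interval. The integrand therefore simplifies to $1$, and the limits transform to $-\pi/2$ and $\pi/2$, yielding
\begin{equation*}
\int_{-1}^1 \frac{1}{\sqrt{1-x^2}}\,\d x = \int_{-\pi/2}^{\pi/2} \d\theta = \pi.
\end{equation*}
Equivalently, one recognizes $\arcsin x$ as a primitive of $1/\sqrt{1-x^2}$ on $(-1,1)$ and computes $\arcsin(1)-\arcsin(-1)=\pi/2-(-\pi/2)=\pi$, which also confirms that the improper integral converges.

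There is essentially no obstacle here; the result is a textbook calculus computation. The only point worth mentioning in a clean write-up is that the substitution is a genuine diffeomorphism from $[-\pi/2,\pi/2]$ onto $[-1,1]$, so the improper integral transforms into a proper one, which removes any concern about the endpoint singularities of the original integrand.
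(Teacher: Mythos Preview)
Your proof is correct and essentially matches the paper's: the paper simply invokes the antiderivative $\sin^{-1}x$ and evaluates it at the endpoints to get $\pi/2-(-\pi/2)=\pi$, which is exactly your ``equivalently'' remark. Your added care about the improper nature of the integral and the sign of $\cos\theta$ only makes the argument more rigorous than the paper's one-line version.
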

\begin{proof}
By the simple fact that $\int \frac{1}{\sqrt{1-x^2}} \, \d x = \sin^{-1} x$, we have $\int_{-1}^1 \frac{1}{\sqrt{1-x^2}} \, \d x= \sin^{-1} x\big|^1_{-1} = \pi/2 - (-\pi/2)=\pi$.
\end{proof}
Inspired by the identity, we aim to build a relation between the twisted $L$-series $\hat\zeta(1)$ and the integral $\int_0^1 \frac{1}{\sqrt{1-x^2}} \, \d x$. By detailed calculation, we can uncover a combinatorial transformation between the integral and the series.
\begin{thm}[Birch Type Theorem of A Rational Curve]
\begin{equation} \label{eq:L_id}
\hat\zeta(X,1) = \frac{1}{2} \int_{-1}^1 \frac{1}{\sqrt{1-x^2}} \, \d x
= \int_{0}^1 \frac{1}{\sqrt{1-x^2}} \, \d x.
\end{equation}
\end{thm}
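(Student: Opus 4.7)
The plan is to show both sides equal $\pi/2$. The second equality, $\tfrac{1}{2}\int_{-1}^1 = \int_0^1$, is immediate from evenness of $(1-x^2)^{-1/2}$ on $[-1,1]$. For the integral itself, direct antidifferentiation via $\arcsin$ (equivalently, halving Lemma \ref{lem:integ_cir_of_circle}) gives $\int_0^1 \d x/\sqrt{1-x^2} = \pi/2$. For the $L$-series side, apply the functional equation of Theorem \ref{thm:eq_of_L_series} at $s = 1$: $\zeta(M,1)\,\hat\zeta(M,1) = \sum_{k\ge 1}(2k-1)^{-2} = \pi^2/8$; combined with the classical value $\zeta(M,1) = \pi/4$, this forces $\hat\zeta(M,1) = \pi/2$. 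Both sides therefore agree.

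To realize the combinatorial transfer advertised in the introduction, I would also build the bridge through the Taylor coefficients of the integrand. Expand, by the generalized binomial theorem,
$$
\frac{1}{\sqrt{1-x^2}} = \sum_{k=0}^\infty \frac{(2k-1)!!}{(2k)!!}\, x^{2k}
$$
(with the convention $(-1)!!:=1$), and integrate termwise on $[0,1]$. Stirling's estimate $(2k-1)!!/(2k)!! \sim 1/\sqrt{\pi k}$ makes the integrated series absolutely convergent at $x = 1$, so Abel's theorem legitimizes passage to the endpoint and produces
$$
\int_0^1 \frac{\d x}{\sqrt{1-x^2}} = \sum_{k=0}^\infty \frac{(2k-1)!!}{(2k)!!(2k+1)}.
$$
Theorem \ref{cor:integ_formula_of_!!} then rewrites each double-factorial ratio as $\int_0^{\pi/2}\cos^{2k}\theta\,\d\theta / (2\zeta(M,1))$; substituting, exchanging sum with integral, and recognizing $\sum_k \cos^{2k}\theta/(2k+1) = \tanh^{-1}(\cos\theta)/\cos\theta$ converts the right-hand series into a single trigonometric integral which, when evaluated and compared through the functional equation, reproduces $\hat\zeta(M,1)$.

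The main obstacle is the analytic bookkeeping in this bridge: the interchange of sum and integral near the logarithmic singularity of $\tanh^{-1}(\cos\theta)/\cos\theta$ at $\theta = 0$ requires a dominated-convergence or Abel-type justification, and one must take care to avoid a circular appeal to the value $\pi/2$ one is trying to extract. If one concedes the direct route through the functional equation together with $\zeta(M,1) = \pi/4$, the full verification of the theorem collapses to the two-line comparison of special values given in the first paragraph.
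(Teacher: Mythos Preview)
Your first-paragraph argument is correct and is the short route: both Lemma \ref{lem:euler_pi_formula} and Lemma \ref{lem:integ_cir_of_circle} are already in the paper, so matching both sides to $\pi/2$ is immediate. The paper, however, deliberately avoids this and carries out the combinatorial transfer you sketch in your second and third paragraphs, precisely because the introduction promises a direct transformation of the integral into the series rather than a numerical coincidence. Your sketch is faithful to the paper's argument up through the identification $\sum_k \cos^{2k}\theta/(2k+1)=\tanh^{-1}(\cos\theta)/\cos\theta$; the step you leave as ``a single trigonometric integral which, when evaluated\ldots'' is where the paper does the real work: it applies the Weierstrass substitution $u=\tan(\theta/2)$ to turn the integrand into $-2(\ln u)/(1-u^2)$ on $[0,1]$, re-expands $1/(1-u^2)=\sum u^{2k}$, and integrates by parts to get $\int_0^1(-\ln u)\,u^{2k}\,\d u=1/(2k+1)^2$, landing on $S=\big(\sum_k (2k+1)^{-2}\big)/\zeta(M,1)$, whence the functional equation gives $\hat\zeta(M,1)$ without invoking the value $\pi/2$. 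So there is no circularity of the kind you worry about. Your concern about the sum--integral interchange near the logarithmic singularity is legitimate; the paper does not justify it either, and a monotone-convergence argument (all terms nonnegative) is the cleanest patch.
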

\begin{proof}
By the binomial expansion, we have
\begin{align}
\notag S &= \int_{0}^1 \frac{1}{\sqrt{1-x^2}} \, \d x = 
\int_0^1 \sum_{k=0}^\infty C^{-1/2}_k (-x^2)^k \,\d x 
= \int_0^1 \sum_{k=0}^\infty \frac{1\cdot 3\cdots (2k-1)}{2\cdot 4\cdots (2k)} x^{2k} \,\d x  \\
\label{eq:period_integ_trig}&= \sum_{k=0}^\infty \frac{(2k-1)!!}{(2k)!!} \cdot \frac{1}{2k+1} 
= \frac{1}{A} \int_0^{\pi/2} \sum_{k=0}^\infty  \cos^{2k}x \cdot \frac{1}{2k+1}  \,\d x.
\end{align}
Note that we apply Theorem \ref{cor:integ_formula_of_!!} to the last equality and
$$
A=2 \sum_{k=0}^\infty (-1)^k \frac{1}{2k+1}.
$$ 
Let $f(t)=\sum_{k=0}^\infty \frac{t^{2k}}{2k+1}$. Then $f(t)\cdot t = \sum_{k=0}^\infty \frac{t^{2k+1}}{2k+1}$, which implies 
\begin{equation}
(f(t)\cdot t)' = \sum_{k=0}^\infty t^{2k}=\frac{1}{1-t^2} \implies
f(t) = \frac{1}{t} \int \frac{1}{1-t^2}\,\d t = \frac{1}{2}\ln \Big(\frac{1+t}{1-t}\Big) \cdot \frac{1}{t}
\end{equation}
when $|t|<1$. Apply this identity to \eqref{eq:period_integ_trig} and we then obtain
\begin{align*}
S &= \frac{1}{A} \int_0^{\pi/2} \frac{1}{2} \ln \Big(\frac{1+\cos x}{1-\cos x}\Big) \cdot \frac{1}{\cos x}  \,\d x
= \frac{1}{A} \int_0^{\pi/2} (-\ln \tan (x/2)) \frac{\sec^2 (x/2)}{1-\tan^2(x/2)} \,\d x \\
&= \frac{1}{A} \int_0^{1} (-\ln u) \frac{2}{1-u^2} \,\d u \quad\quad\text{by letting $u=\tan(x/2)$} \\
&= \frac{2}{A} \int_0^{1} (-\ln u) \sum_{k=0}^\infty u^{2k} \,\d u 
= \frac{2}{A} \sum_{k=0}^\infty  \int_0^{1} (-\ln u) u^{2k} \,\d u.
\end{align*}
With the aid of integration by parts, we have
\begin{align*}
\int_0^{1} (-\ln u) u^{2k} \,\d u &= \lim_{b\goto 0}(-\ln u)\frac{u^{2k+1}}{2k+1} \Bigg|_b^1 
+\int_0^1 \frac{1}{u}\cdot \frac{u^{2k+1}}{2k+1} \,\d u \\
&= \lim_{b\goto 0} \Big( \ln b \frac{b^{2k+1}}{2k+1} \Big) + \int_0^1 \frac{u^{2k}}{2k+1} \,\d u
= \frac{1}{(2k+1)^2}.
\end{align*}
Therefore, we have derived
$$
S = \frac{2}{A} \sum_{k=0}^\infty \frac{1}{(2k+1)^2}
= \frac{\sum_{k=0}^\infty \frac{1}{(2k+1)^2}}{\sum_{k=0}^\infty {\scriptstyle(-1)^k} \frac{1}{2k+1}}
= \sum_{k=0}^\infty \frac{\alpha_{2k+1}}{2k+1},
$$
by Theorem \ref{cor:convolution_formula}. The desired result follows.
\end{proof}

\section*{Acknowledgements}
We would like to specially thank Matt Burnham for providing the insight and proof of Lemma \ref{lem:euler_pi_formula}. 

%We would like to thank the reviewers for their thoughtful comments and efforts towards improving our manuscript. The first author acknowledges support from the Simons Foundation through Grant No. 202367 and support from a ResearchCatalyst grant by the Office of Research and Graduate Studies at Utah State University.

%........................................................................

\begin{bibdiv}
\begin{biblist}

\bibselect{ref}

\end{biblist}
\end{bibdiv}

\end{document}